\newcommand{\hhmu}{h,\hat{\mu}}
\newcommand{\Hcal}{\mathcal{H}}
\newcommand{\bu}{\mathbf{b}}
\newcommand{\Th}{\mathcal{T}_{h}}
\newcommand{\aSUPG}{a_{\mathrm{SUPG}}}
\newcommand{\uhr}[1][]{u^{#1}_{h,\hat{\mu}}}
\newcommand{\lrp}[1]{ \left( #1 \right)}
\newcommand{\piun}[1][n]{\pi^{#1}u}
\newcommand{\trnorm}[1]{{\left\vert\kern-0.25ex\left\vert\kern-0.25ex\left\vert #1 
    \right\vert\kern-0.25ex\right\vert\kern-0.25ex\right\vert}}
\newcommand{\Lthm}{L^2_{\hat{\mu}}}
\newcommand{\hmu}{\hat{\mu}}
\newcommand{\dt}{\triangle t}
\newcommand{\vti}{\tilde{v}}
\newcommand{\eti}[1]{\tilde{e}^{n+1}}
\newcommand{\Clocbasinv}{C_{\mathrm{lbi}}}
\newtheorem{assumption}{Assumption}[section]
\newtheorem{lemma}{Lemma}[section]
\newtheorem{proposition}{Proposition}[section]
\newtheorem{theorem}{Theorem}[section]
\newtheorem{alg}{Algorithm}
\title{Error estimates for SUPG-stabilised Dynamical Low Rank Approximations}
\author[1]{Fabio Nobile}
\author[1]{Thomas Trigo Trindade}
\affil[1]{CSQI, \'Ecole Polytechnique F\'ed\'erale de Lausanne, Switzerland}
\begin{document}

\maketitle


\abstract{
	We perform an error analysis of a fully discretised Streamline Upwind Petrov Galerkin Dynamical Low Rank (SUPG-DLR) method for random time-dependent advection-dominated problems. The time integration scheme has a splitting-like nature, allowing for potentially efficient computations of the factors characterising the discretised random field. 
	The method allows to efficiently compute a low-rank approximation of the true solution, while naturally ``inbuilding'' the SUPG stabilisation. 
	Standard error rates in the $\|\cdot\|_{L^2}$ and $\|\cdot\|_{\mathrm{SUPG}}$-norms are recovered. 
	Numerical experiments validate the predicted rates. 
}

\section{Introduction}

The simulation of random time-dependent advection-dominated problems 
\begin{equation} \label{eqn:adv-diff-reac}
        \partial_t u  - \varepsilon \Delta u + \mathbf{b} \cdot \nabla u + c u = f, \quad \mathrm{in}\,D \subset  \mathbb{R}^d,
\end{equation}
with coefficients $\varepsilon$, $\bu$, $c$ and data $f$ depending on some random parameter $\omega \in \Omega$, with probability measure $\mu$ on $\Omega$, remains a challenge for multiple reasons. 
These processes often have poorly decaying Kolmogorov $n$-widths in the time-space domain, even if at each point in time the solution profile is well-approximated by a small subspace. 
Furthermore, it is well-known that applying the standard Finite Element Method to such problems causes the numerical solution to display unphysical spurious oscillations, in particular when the solution has sharp gradients and/or boundary layers.
For practical purposes, it becomes necessary to remove or alleviate these oscillations by using some stabilisation strategy.

The purpose of~\cite{notr23} was to introduce the generalised Petrov-Galerkin Dynamical Low Rank (PG-DLR) framework and its particularisation to the Streamline Upwind/Petrov-Galerkin (SUPG-DLR), which allows to simultaneously tackle both issues. 
The Dynamical Low Rank (DLR)~\cite{kolu07} framework, in this work written in its Dynamically Orthogonal (DO)~\cite{sale09} formalism, consists in seeking an approximation of the form $u_{\mathrm{DLR}} = \sum_{i=1}^R U_i(t,x) Y_i(t, \omega)$ of the solution $u_{\mathrm{true}}(t,x,\omega)$ of~\eqref{eqn:adv-diff-reac}. 
The peculiar feature of this framework is that the \textit{physical} $\{U_i(t,x)\}_{i=1}^R$ and the \textit{stochastic} modes $\{Y_i(t, \omega)\}_{i=1}^R$ evolve in time to follow a (quasi-)optimal low-rank approximation of $u_{\mathrm{true}}$, making it suited for the type of transport-dominated problems described above. 
As an extension of that framework, the PG-DLR framework allows to seamlessly import many stabilisation techniques that can be framed as generalised Petrov-Galerkin problems. 

The focus of this paper is an error analysis of the SUPG-DLR framework. 
This work inscribes itself within a growing body of literature addressing the stabilisation of Reduced Order Models, including e.g.~\cite{tobaro18,giiljowe15}
for SUPG-stabilised POD methods for advection-dominated problems. 
An error analysis for the SUPG-stabilised POD method was carried out in~\cite{jomono22} for time-dependent advection-diffusion-reaction problems. 
In the DO setting, a noteworthy alternative to our method is the stabilisation based on Shapiro filters in~\cite{fele18}, applied after each time step to smooth out the oscillations.

\section{Problem setting \& SUPG-DLR approximations}

Solutions to random PDEs are function-valued random variables. 
In this work, we consider the advection-diffusion-reaction problem~\ref{eqn:adv-diff-reac} with homogeneous Dirichlet boundary conditions $u = 0$ on $\partial D$ and initial condition $u_{\rvert{t=0}} = u_0 \in L^2_{\hmu}(H^1_0(D))$. The coefficients verify the following the Coefficient Assumptions (\hypertarget{coefa}{\textsc{CoefA}}):~$\varepsilon > 0$, $c \in L_{\hmu}^{\infty}(L^{\infty}(D))$ and $ c(x, \omega) \geq c_0 > 0$ for a.e. $x \in D, \forall \omega \in \hat{\Omega}$, $f \in L^2_{\hmu}(L^2(D))$, $\bu \in (L^{\infty}(D))^d$, $\mathrm{div}\;\bu(x) = 0$. Therefore the solution $u_{\mathrm{true}}(t,\cdot,\omega)$ belongs to $H^1_0(D)$ for (almost) every $t > 0$ and $\omega \in \Omega$. 
The probability space is discretised via a collocation method (e.g., the Monte Carlo method), yielding the collocation points $\hat{\Omega} \coloneq \{\omega_i\}_{i=1}^{N_C} \subset \Omega$ and a discrete measure $\hmu$. $L^2_{\hmu}(\hat{\Omega})$ denotes the space of random variables, with scalar product $\mathbb{E}_{\hmu}[YZ] = \sum_{i=1}^{N_C} m_i Y_i Z_i$, where $\{m_i\}_{i=1}^{N_C}$ are positive weights summing up to $1$, and $Y_i = Y(\omega_i)$, $Z_i = Z(\omega_i)$. 
The random solution $u_{\mathrm{true}}(t, \cdot,\cdot)$ satisfies for almost every $t$, $u \in L^2_{\hmu}(\hat{\Omega}, X) \coloneqq L^2_{\hmu}(X)$, where $X = H^1_0(D)$ (with standard $H^1_0$-scalar product) or $L^2(D)$. 
These Bochner spaces admit the scalar product~$(u,v)_{L^2_{\hmu}(X)} = \sum_{i=1}^{N_C} m_i \langle u(\omega_i), v(\omega_i) \rangle_{X}$.
Hereafter, we use the shorthand notation $(\cdot, \cdot)$ and $\|\cdot\|$ to denote the $L^2_{\hat{\mu}}(L^2(D))$ inner product and norm. 

We use the Finite Elements Method on a quasi-uniform mesh $\mathcal{T}_h$ with characteristic mesh size $h$, and consider the space of continuous piece-wise polynomials of degree $k$, $V_h \coloneq \mathbb{P}_k^C(\mathcal{T}_h) \subset H^1_0(D)$ 
where $k$ denotes the polynomial degree and $N_h \coloneqq |V_h|$. 
In this work, we will consider the \textit{advection-dominated regime} with the condition $\| \bu \|_{L^{\infty}} h > 2 \varepsilon$ assumed true hereafter.

The numerical approximation $\tilde{u}_{\hhmu}$ is sought in $V_h \otimes \Lthm$.
The inverse inequality from standard Finite Element theory can be extended to elements in $V_h \otimes L^2_{\hat{\mu}}$, yielding~$\|\nabla \tilde{u}_{\hhmu}\| \leq C_I h^{-1} \|\tilde{u}_{\hhmu}\|$ for some $C_I > 0$ and every $\tilde{u}_{\hhmu} \in V_h \otimes \Lthm$, as the inequality holds pointwise in $\omega$. 
For the same reasons, the standard Poincaré inequality can be extended to $V_h \otimes \Lthm$, yielding $\|\tilde{u}_{\hhmu}\| \leq C_P \| \nabla \tilde{u}_{\hhmu} \|$, where $C_P$ is the Poincaré constant.
Hereafter, to lighten the notation, $\tilde{u}  \equiv \tilde{u}_{\hhmu} \in V_h \otimes \Lthm $.

The DLR approximation belongs to the \textit{differential manifold of $R$-rank functions}, defined as
\vspace{-1em}
\begin{multline}
        \mathcal{M}_{R} = \{\tilde{u} \in V_h \otimes L^2_{\hat{\mu}}(\hat{\Omega}) : \tilde{u} = \sum_{i=1}^R U_i Y_i, \text{ s.t. } \mathbb{E}_{\hat{\mu}}[Y_i Y_j] = \delta_{ij}, 
          \\
          \{U_i\}_{i=1}^R \text{ lin. ind. } 
 \text{and } \{U_i\}_{i=1}^R \in V_h, \{Y_i\}_{i=1}^R \in L^2_{\hat{\mu}}(\hat{\Omega})\}.
\end{multline}
Each point $u \in \mathcal{M}_R$ can be equipped with a tangent space, spanned by tangent vectors $\delta u = \sum_{i=1}^R \delta u_i Y_i + U_i \delta y_i$, uniquely identified by imposing the \textit{Dual Dynamically Orthogonal} (Dual DO) condition~\cite{muno18}, $\mathbb{E}[Y_i \delta y_j] = 0$ for $ i,j = 1, \ldots,R$. This leads to the following characterisation  
\begin{multline}
	\mathcal{T}_u \mathcal{M}_{R} = \{\delta u = \sum_{i=1}^R \delta u_i Y_i + U_i \delta y_i, \text{ such that } \{\delta u_i\}_{i=1}^R \in V_h, \\
          \{\delta y_i\}_{i=1}^R \in L^2_{\hmu}(\hat{\Omega}), 
	\mathbb{E}_{\hmu} [\delta y_i Y_j] = 0, \, \forall 1 \leq i,j \leq R
	\}.
\end{multline}
Given $U = (U_1, \ldots, U_R)$ and $Y = (Y_1, \ldots, Y_R)$ s.t. $u = U Y^{\top}$, the tangent space at $u$ is denoted by $\mathcal{T}_{\!\!U Y^{\top}} \mathcal{M}_{R}$. 
Furthermore, for an $\Lthm$-orthonormal set $Y$, let $\mathcal{Y} \coloneqq \mathrm{span}(Y_1, \ldots, Y_R)$, and $\mathcal{P}_{\mathcal{Y}}[v] = \sum_{i=1}^R \mathbb{E}[vY_i]Y_i$ and $\mathcal{P}^{\perp}_{\mathcal{Y}}[v] = v - \mathcal{P}_{\mathcal{Y}}[v]$. 
%

To recover dynamic equations for the physical and stochastic modes, the idea is to project Equation~\eqref{eqn:adv-diff-reac} onto the tangent space $\mathcal{T}_{UY^{\top}} \mathcal{M}_R$ at each time instant. 
The SUPG-DLR framework proposes to solve the problem
\begin{multline} \label{eqn:PG-start}
        (\dot{u}_{\mathrm{DLR}}, \tilde{v} +\delta \bu \cdot \nabla \tilde{v}) + \aSUPG(u_{\mathrm{DLR}}, \tilde{v}) = (f, \tilde{v}+\delta \bu \cdot \nabla \tilde{v}).
        \\ 
        \forall \tilde{v} \in \mathcal{T}_{u_{\mathrm{DLR}}}\mathcal{M}_{R}, \mathrm{a.e.}\; t \in (0,T],
\end{multline}
with 
\begin{multline*} \label{eqn:aSUPGref}
        \aSUPG(\tilde{u}, \tilde{v}) = (\varepsilon \nabla \tilde{u}, \nabla \tilde{v}) + (\bu \cdot \nabla \tilde{u}, \tilde{v}) + (c\tilde{u}, \tilde{v}) 
        \\ 
        +  \sum_{K \in \Th} \delta_K(- \varepsilon \Delta \tilde{u} + \bu \cdot \nabla \tilde{u} + c\tilde{u}, \bu \cdot \nabla \tilde{v})_{K,\Lthm},
\end{multline*}
where $(\cdot, \cdot)_{K,\Lthm} \coloneqq (\cdot, \cdot)_{L^2_{\hat{\mu}}(L^2(K))}$. Hereafter, we use a uniform stabilisation parameter $\delta \equiv \delta_K$ for each $K \in \mathcal{T}_h$.
 
Particularising the conditions in~\cite{notr23} to our setting, if 
(\hyperlink{coefa}{\textsc{CoefA}}) and
        \begin{equation} \label{eqn:deltaK-coerc}
                \delta \leq 
                        \min_{K \in \Th}
                \left\{ 
                        \frac{1}{2 \|c\|_{L^{\infty}_{\hmu}(L^{\infty})}}, 
                        \frac{h_K^2}{2 \varepsilon C_I^2},
                        \frac{h_K}{\|\bu\|_{L^{\infty}} C_I}
                \right\}
        \end{equation}
hold true, then 
        \begin{equation}
                a_{\mathrm{SUPG}}(\tilde{u}, \tilde{u}) \geq \frac{1}{2} \| \tilde{u}\|^2_{\mathrm{SUPG}},
        \end{equation}
        where $\|\tilde{u}\|_{\mathrm{SUPG}}^2 =  \varepsilon \|\nabla \tilde{u}\|^2 + \delta \sum_{K \in \mathcal{T}_{\!h}} \|\bu \cdot \nabla \tilde{u} \|^2_{K,\Lthm} + \|c^{\nicefrac{1}{2}} \tilde{u}\|^2 $.
        This norm is suitable for advection-dominated problems, as it offers a better control of the stream-line diffusion. 
        As an immediate consequence of~\eqref{eqn:deltaK-coerc}, \( \|\vti + \delta \bu \cdot \nabla \vti \| \leq 2 \| \vti \|\). 
        Two additional properties of the SUPG setting are summarised below : 
        \begin{lemma} \label{lem:supg-bounds}
                Assuming~(\hyperlink{coefa}{\textsc{CoefA}}), it holds
                \begin{align}
                        \aSUPG(\tilde{u}, \vti) \leq C_1 \|\nabla \tilde{u} \| \| \vti  \|, && \| \tilde{u}\| \leq c_0^{-1} \|\tilde{u}\|_{\mathrm{SUPG}},
                \end{align}
                where $C_1 = (C_I + 2)\| \bu\|_{L^{\infty}} + 2 C_P \|c\|_{L^{\infty}_{\hmu}(L^{\infty})}$.  
        \end{lemma}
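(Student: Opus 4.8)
The plan is to establish the two inequalities separately, the second being essentially free. Since $c(x,\omega)\ge c_0>0$ for a.e.\ $x\in D$ and every $\omega\in\hat\Omega$, the definition of the SUPG-norm immediately gives $c_0\,\|\tilde u\|^2\le\|c^{1/2}\tilde u\|^2\le\|\tilde u\|_{\mathrm{SUPG}}^2$, which yields $\|\tilde u\|\le c_0^{-1}\|\tilde u\|_{\mathrm{SUPG}}$. For the continuity bound I would estimate the four contributions to $\aSUPG(\tilde u,\vti)$ one at a time, using throughout that the inverse and Poincaré inequalities lift to $V_h\otimes\Lthm$ (they hold pointwise in $\omega$, as recalled above) and the advection-dominated hypothesis $\|\bu\|_{L^\infty}h>2\varepsilon$, equivalently $\varepsilon h^{-1}<\tfrac12\|\bu\|_{L^\infty}$.

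For the Galerkin part: the diffusion term is handled by Cauchy--Schwarz, then the inverse inequality on $\vti$ and the advection-dominated bound, giving $(\varepsilon\nabla\tilde u,\nabla\vti)\le\varepsilon C_I h^{-1}\|\nabla\tilde u\|\,\|\vti\|\le\tfrac{C_I}{2}\|\bu\|_{L^\infty}\|\nabla\tilde u\|\,\|\vti\|$; the advection term is bounded directly by $\|\bu\|_{L^\infty}\|\nabla\tilde u\|\,\|\vti\|$; and the reaction term by $\|c\|_{L^{\infty}_{\hmu}(L^{\infty})}\|\tilde u\|\,\|\vti\|\le C_P\|c\|_{L^{\infty}_{\hmu}(L^{\infty})}\|\nabla\tilde u\|\,\|\vti\|$ after Poincaré.

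The only term requiring care is the stabilisation sum $\sum_{K\in\Th}\delta\,(-\varepsilon\Delta\tilde u+\bu\cdot\nabla\tilde u+c\tilde u,\ \bu\cdot\nabla\vti)_{K,\Lthm}$. I would apply Cauchy--Schwarz on each element and then over the elements, and use $\|\bu\cdot\nabla\vti\|_{K,\Lthm}\le\|\bu\|_{L^\infty}\|\nabla\vti\|_{K,\Lthm}$, to reduce it to $\delta\,\mathcal R\,\|\bu\|_{L^\infty}\|\nabla\vti\|$ with $\mathcal R\coloneqq\big(\sum_K\|-\varepsilon\Delta\tilde u+\bu\cdot\nabla\tilde u+c\tilde u\|^2_{K,\Lthm}\big)^{1/2}$. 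The broken residual $\mathcal R$ is estimated termwise via the triangle inequality, an (element-wise, second-order) inverse inequality $\|\Delta\tilde u\|_K\le C_I h_K^{-1}\|\nabla\tilde u\|_K$, Poincaré, and again $\varepsilon h^{-1}<\tfrac12\|\bu\|_{L^\infty}$, yielding $\mathcal R\le\big[(\tfrac{C_I}{2}+1)\|\bu\|_{L^\infty}+C_P\|c\|_{L^{\infty}_{\hmu}(L^{\infty})}\big]\|\nabla\tilde u\|$. It then remains to absorb the stabilisation parameter: from the third constraint in \eqref{eqn:deltaK-coerc}, $\delta\le h_K/(\|\bu\|_{L^\infty}C_I)$, together with the element-wise inverse inequality, one gets $\delta\,\|\bu\|_{L^\infty}\|\nabla\vti\|\le\|\vti\|$, so the stabilisation term is bounded by $\big[(\tfrac{C_I}{2}+1)\|\bu\|_{L^\infty}+C_P\|c\|_{L^{\infty}_{\hmu}(L^{\infty})}\big]\|\nabla\tilde u\|\,\|\vti\|$.

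Summing the four bounds — $\tfrac{C_I}{2}\|\bu\|_{L^\infty}$ from diffusion, $\|\bu\|_{L^\infty}$ from advection, $C_P\|c\|_{L^{\infty}_{\hmu}(L^{\infty})}$ from reaction, and $(\tfrac{C_I}{2}+1)\|\bu\|_{L^\infty}+C_P\|c\|_{L^{\infty}_{\hmu}(L^{\infty})}$ from stabilisation — reproduces exactly $C_1=(C_I+2)\|\bu\|_{L^\infty}+2C_P\|c\|_{L^{\infty}_{\hmu}(L^{\infty})}$. The main (and really the only) obstacle is the bookkeeping in the stabilisation term: splitting the strong residual, invoking the second-order inverse estimate for the $\Delta$ piece, and absorbing $\delta$ with the correct element-wise constraint; everything else is routine.
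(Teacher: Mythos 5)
Your proof is correct and follows essentially the same route as the paper's, which only sketches the diffusion and $\varepsilon\Delta$-stabilisation contributions; your full term-by-term bookkeeping (Cauchy--Schwarz elementwise, the second-order inverse inequality for the $\Delta$ piece, Poincar\'e for the reaction pieces, and the third constraint in~\eqref{eqn:deltaK-coerc} to absorb $\delta\,\|\bu\|_{L^\infty}\|\nabla\tilde v\|$ into $\|\tilde v\|$) reproduces the constant $C_1$ exactly. One pedantic remark: the chain $c_0\|\tilde u\|^2\le\|c^{1/2}\tilde u\|^2\le\|\tilde u\|^2_{\mathrm{SUPG}}$ actually yields $\|\tilde u\|\le c_0^{-1/2}\|\tilde u\|_{\mathrm{SUPG}}$ rather than $c_0^{-1}\|\tilde u\|_{\mathrm{SUPG}}$, so the stated exponent only follows when $c_0\le 1$ --- this appears to be a typo in the lemma itself rather than a flaw in your argument.
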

        \begin{proof}
                We detail the proof for some terms, the bounds for the others being direct. Firstly, $\varepsilon |(\nabla \tilde{u}, \nabla \vti )| \leq \|\nabla \tilde{u}\| \|\varepsilon \nabla \vti \| \leq \frac{C_I \|\bu\|_{L^{\infty}}}{2} \|\nabla \tilde{u}\| \|\vti \|$, having used $\varepsilon < \frac{1}{2} \| \bu \|_{L^{\infty}} h$ and the inverse inequality. Additionally, letting $C_2 = \frac{C_I }{2}\|\bu\|_{L^{\infty}}$,
                \begin{equation*}        
                         | \delta \sum_{K \in  \Th} (\varepsilon \Delta \tilde{u}, \bu \cdot \nabla \vti)_{K,\Lthm}| 
                        \leq 
                        C_2 \sum_{K \in \Th} \| \nabla \tilde{u} \|_{K,\Lthm} \| \vti  \|_{K,\Lthm}
                        \leq
                        C_2
                        \| \nabla \tilde{u}\| \| \vti\|. 
                \end{equation*}
        \end{proof}
        In~\cite{notr23}, we use Algorithm~\ref{alg:evaspl} reproduced below to sequentially update the physical and stochastic modes in a (potentially) cheap fashion, resulting in a non-linear update on~$\mathcal{M}_R$. The algorithm was originally proposed and analysed in~\cite{kavino21} for random uniform coercive problems, and is very similar to the Projector-Splitting algorithm~\cite{luos14}.
In this work, we focus on the implicit version of the scheme; however, semi-implicit and fully explicit versions are also possible. 
\begin{alg} \label{alg:evaspl}
Given the solution $\uhr[n] = \sum_{i=1}^R U_i^n Y_i^n$ :  
\begin{enumerate}
        \item Find $\tilde{U}^{n+1}_j, j= 1,\ldots,R$, such that
	\begin{multline}
          \dt^{-1}(\tilde{U}^{n+1}_j - U^n_j,v_{h} + \delta \bu \cdot \nabla v_{h})_{L^2(D)}
		+ \aSUPG(\uhr[n+1], v_h Y^n_j) 
    \\ 
    = (f^{n+1}, v_h Y^n_j + \bu \cdot \nabla v_h Y^n_j),  
    \quad \forall v_{h} \in V_h. \label{eqn:PG-disc-varf-detmodes-3}
	\end{multline}
\item Find $\tilde{Y}^{n+1}_j, j= 1, \ldots,R$ such that $(\tilde{Y}^{n+1}_j - Y^n_j) \in \mathcal{Y}^{\perp} = \mathcal{P}^{\perp}_{\mathcal{Y}}(\Lthm)$ and
		\begin{multline}
            \dt^{-1} \sum_{i=1}^R \mathbb{E}[(\tilde{Y}^{n+1}_i - Y^n_i)  z]\tilde{W}^{n+1}_{ij}
		+  \aSUPG(\uhr[n+1], \tilde{U}^{n+1}_j \mathcal{P}^{\perp}_{\mathcal{Y}} z) 
    \\
    = (f^{n+1}, \tilde{U}_j^{n+1}\mathcal{P}^{\perp}_{\mathcal{Y}}z + \delta \bu \nabla \tilde{U}_j^{n+1}\mathcal{P}^{\perp}_{\mathcal{Y}}z ),
    \quad \forall z \in \Lthm. \label{eqn:PG-disc-varf-stochmodes-3}
		\end{multline}
    where $\tilde{W}^{n+1}_{ij} = (\tilde{U}^{n+1}_i,  \tilde{U}^{n+1}_j + \delta \bu \nabla \tilde{U}^{n+1}_j)_{L^2(D)}$.
	\item Reorthonormalise $\tilde{Y}^{n+1}$ such that $\mathbb{E}[Y^{n+1}_i  Y^{n+1}_j ] = \delta_{ij}$ and modify $\{\tilde{U}^{n+1}_i\}_{i=1}^R$ such that $\sum_{i=1}^R \tilde{U}^{n+1}_i \tilde{Y}^{n+1}_i = \sum_{i=1}^R U^{n+1}_i Y^{n+1}_i$.
	\item The new solution is given by $\uhr[n+1] = \sum_{i=1}^R U^{n+1}_i Y^{n+1}_i$.
\end{enumerate}
\end{alg}
When applying Algorithm~\ref{alg:evaspl}, the update verifies a variational formulation (Proposition~\ref{prop:disc-varf}) which allows to analyse the scheme using variational methods and, among others, prove norm-stability of the scheme (Proposition~\ref{prop:stab}).

\begin{proposition}(from~\cite{notr23}) \label{prop:disc-varf} The numerical solution by Algorithm~\ref{alg:evaspl} satisfies
\begin{multline} \label{eqn:discrete-varf-pg-2}
        \frac{1}{\dt}(\uhr[n+1] - \uhr[n], v_{\hhmu} + \delta \bu \cdot \nabla v_{\hhmu})	 + \aSUPG(\uhr[n+1], v_{\hhmu}) = (f^{n+1}, v_{\hhmu} + \delta \bu \cdot \nabla v_{\hhmu}), \\
	\forall v_{\hhmu} \in \mathcal{T}_{\tilde{U}^{n+1}{(Y^{n})^\top}} \mathcal{M}_R.
\end{multline}
\end{proposition}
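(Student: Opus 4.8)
The plan is to reduce~\eqref{eqn:discrete-varf-pg-2} to the two families of equations solved inside Algorithm~\ref{alg:evaspl} by exploiting the explicit description of the tangent space. Recall that every $v_{\hhmu}\in\Tn\mathcal{M}_R$ decomposes as $v_{\hhmu}=\sum_{j=1}^R v_j Y_j^n+\sum_{j=1}^R \tilde U_j^{n+1}z_j$ with $v_j\in V_h$ and $z_j\in\Lthm$ satisfying $\mathbb{E}[z_j Y_i^n]=0$, i.e.\ $z_j\in\mathcal{Y}^{\perp}$ where $\mathcal{Y}=\mathrm{span}(Y_1^n,\dots,Y_R^n)$. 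Since both sides of~\eqref{eqn:discrete-varf-pg-2} are linear in $v_{\hhmu}$, it suffices to verify the identity for the elementary test functions $v_{\hhmu}=v_j Y_j^n$ and $v_{\hhmu}=\tilde U_j^{n+1}z_j$ (with $z_j\in\mathcal{Y}^{\perp}$) and then add the two contributions over $j=1,\dots,R$. I will show that the first choice reproduces~\eqref{eqn:PG-disc-varf-detmodes-3} and the second~\eqref{eqn:PG-disc-varf-stochmodes-3}.

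For $v_{\hhmu}=v_j Y_j^n$, the key point is that $Y_j^n$ is $x$-independent, so $\bu\cdot\nabla(v_j Y_j^n)=(\bu\cdot\nabla v_j)Y_j^n$ and the Bochner pairings separate into a spatial $L^2(D)$-pairing times an expectation against $Y_j^n$. Writing $\uhr[n+1]=\sum_i\tilde U_i^{n+1}\tilde Y_i^{n+1}$ and $\uhr[n]=\sum_i U_i^n Y_i^n$, the discrete time term of~\eqref{eqn:discrete-varf-pg-2} becomes $\dt^{-1}\sum_i\big(\tilde U_i^{n+1}\,\mathbb{E}[\tilde Y_i^{n+1}Y_j^n]-U_i^n\,\mathbb{E}[Y_i^n Y_j^n],\,v_j+\delta\bu\cdot\nabla v_j\big)_{L^2(D)}$. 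Since $\tilde Y_i^{n+1}-Y_i^n\in\mathcal{Y}^{\perp}$ is $\Lthm$-orthogonal to $Y_j^n\in\mathcal{Y}$ and $Y^n$ is $\Lthm$-orthonormal, $\mathbb{E}[\tilde Y_i^{n+1}Y_j^n]=\mathbb{E}[Y_i^n Y_j^n]=\delta_{ij}$, so this collapses to $\dt^{-1}(\tilde U_j^{n+1}-U_j^n,\,v_j+\delta\bu\cdot\nabla v_j)_{L^2(D)}$. The term $\aSUPG(\uhr[n+1],v_j Y_j^n)$ and the load term already have the form appearing in~\eqref{eqn:PG-disc-varf-detmodes-3} (with $v_h=v_j$), hence~\eqref{eqn:discrete-varf-pg-2} tested against $v_j Y_j^n$ is exactly~\eqref{eqn:PG-disc-varf-detmodes-3}.

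For $v_{\hhmu}=\tilde U_j^{n+1}z_j$ with $z_j\in\mathcal{Y}^{\perp}$ the argument is symmetric: $\bu\cdot\nabla(\tilde U_j^{n+1}z_j)=(\bu\cdot\nabla\tilde U_j^{n+1})z_j$, the contribution of $\uhr[n]$ to the time term vanishes because $\mathbb{E}[Y_i^n z_j]=0$, and the contribution of $\uhr[n+1]$ equals $\dt^{-1}\sum_i \tilde W_{ij}^{n+1}\,\mathbb{E}[\tilde Y_i^{n+1}z_j]$ upon recognising $\tilde W_{ij}^{n+1}=(\tilde U_i^{n+1},\tilde U_j^{n+1}+\delta\bu\cdot\nabla\tilde U_j^{n+1})_{L^2(D)}$; using again $\mathbb{E}[Y_i^n z_j]=0$ one rewrites $\mathbb{E}[\tilde Y_i^{n+1}z_j]=\mathbb{E}[(\tilde Y_i^{n+1}-Y_i^n)z_j]$, which is precisely the time term of~\eqref{eqn:PG-disc-varf-stochmodes-3}. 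Since $\mathcal{P}^{\perp}_{\mathcal{Y}}z_j=z_j$ for $z_j\in\mathcal{Y}^{\perp}$, the $\aSUPG$ term and the load term of~\eqref{eqn:discrete-varf-pg-2} coincide with those of~\eqref{eqn:PG-disc-varf-stochmodes-3}. Summing the two elementary cases over $j$ yields~\eqref{eqn:discrete-varf-pg-2} for arbitrary $v_{\hhmu}\in\Tn\mathcal{M}_R$.

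There is no serious difficulty here; the one thing that requires attention is keeping track of what the symbol $\uhr[n+1]$ denotes inside Algorithm~\ref{alg:evaspl}, namely the reconstructed iterate $\sum_i\tilde U_i^{n+1}\tilde Y_i^{n+1}=\sum_i U_i^{n+1}Y_i^{n+1}$, which by Step~3 is left unchanged by the re-orthonormalisation $(\tilde U^{n+1},\tilde Y^{n+1})\mapsto(U^{n+1},Y^{n+1})$. With that understood, the $\aSUPG$-terms and load terms of~\eqref{eqn:discrete-varf-pg-2} restricted to the two elementary test functions match those of~\eqref{eqn:PG-disc-varf-detmodes-3}–\eqref{eqn:PG-disc-varf-stochmodes-3} as written, and only the discrete time term has to be rearranged, which rests entirely on the two orthogonality facts $\mathbb{E}[\tilde Y_i^{n+1}Y_j^n]=\delta_{ij}$ and $\mathbb{E}[Y_i^n z_j]=0$ for $z_j\in\mathcal{Y}^{\perp}$; everything else is the routine factorisation of the Bochner inner products into spatial and stochastic parts.
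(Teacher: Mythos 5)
Your argument is correct and is the standard (and surely intended) one: the paper states Proposition~\ref{prop:disc-varf} without proof, importing it from~\cite{notr23}, and the natural derivation is exactly yours — decompose $v_{\hhmu}\in\mathcal{T}_{\tilde U^{n+1}(Y^n)^{\top}}\mathcal{M}_R$ into the two tangent families $v_hY^n_j$ and $\tilde U^{n+1}_jz_j$ with $z_j\in\mathcal{Y}^{\perp}$, and recover~\eqref{eqn:PG-disc-varf-detmodes-3} and~\eqref{eqn:PG-disc-varf-stochmodes-3} using $\mathbb{E}[\tilde Y^{n+1}_iY^n_j]=\delta_{ij}$ and $\mathbb{E}[Y^n_iz_j]=0$. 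The only point worth flagging is that the load term of~\eqref{eqn:PG-disc-varf-detmodes-3} as printed lacks the factor $\delta$ in front of $\bu\cdot\nabla v_h\,Y^n_j$, which you silently (and correctly) treat as a typo when matching it to $(f^{n+1},\Hcal v_{\hhmu})$.
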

\begin{proposition} (from~\cite{notr23}) \label{prop:stab}
        Assuming $\delta$ verifies~\eqref{eqn:deltaK-coerc} and $\delta \leq \nicefrac{\dt}{4}$, then it holds for the numerical solution computed by Algorithm~\ref{alg:evaspl}
        \begin{equation}
                \|\uhr[N]\|^2 + \sum_{n = 1}^N \dt \|\uhr[n]\|_{\mathrm{SUPG}}^2 \leq \|\uhr[0]\|^2 + \dt\left(\frac{4}{c_0} + 4 \delta \right) \sum_{j=1}^N \|f^j\|^2. \label{eqn:stab-estim-dlr}
        \end{equation}
\end{proposition}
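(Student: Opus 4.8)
The plan is to reduce everything to the single-step variational formulation of Proposition~\ref{prop:disc-varf} tested against the solution itself. The first point to establish is that $\uhr[n+1]$ is an admissible test function in~\eqref{eqn:discrete-varf-pg-2}, i.e. $\uhr[n+1]\in\Tn\mathcal{M}_R$. By Step~3 of Algorithm~\ref{alg:evaspl}, $\uhr[n+1]=\sum_i\tilde U^{n+1}_i\tilde Y^{n+1}_i=\sum_i\tilde U^{n+1}_iY^n_i+\sum_i\tilde U^{n+1}_i(\tilde Y^{n+1}_i-Y^n_i)$, and by Step~2 each increment $\tilde Y^{n+1}_i-Y^n_i$ lies in $\mathcal{Y}^{\perp}$, so $\mathbb{E}_{\hmu}[(\tilde Y^{n+1}_i-Y^n_i)Y^n_j]=0$ for all $i,j$; this is precisely the structure of a tangent vector at the point $\tilde U^{n+1}(Y^n)^{\top}$ (with physical components $\tilde U^{n+1}_i\in V_h$ and stochastic increments $\tilde Y^{n+1}_i-Y^n_i$ satisfying the Dual DO condition). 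A minor preliminary point here is that $\tilde U^{n+1}(Y^n)^{\top}$ is genuinely a point of $\mathcal{M}_R$: $Y^n$ is $\Lthm$-orthonormal from the previous reorthonormalisation, and one needs the $\{\tilde U^{n+1}_i\}$ to be linearly independent, which is part of the scheme being well-posed.

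Having set $v_{\hhmu}=\uhr[n+1]$, I would multiply~\eqref{eqn:discrete-varf-pg-2} by $2\dt$ and treat the four resulting groups of terms. For the time term, the polarisation identity gives $2(\uhr[n+1]-\uhr[n],\uhr[n+1])=\|\uhr[n+1]\|^2-\|\uhr[n]\|^2+\|\uhr[n+1]-\uhr[n]\|^2$, producing the telescoping quantity and a nonnegative increment term. The streamline part of the time term, $2\delta(\uhr[n+1]-\uhr[n],\bu\cdot\nabla\uhr[n+1])$, is the only indefinite piece: I would bound it below by Young's inequality as $-\|\uhr[n+1]-\uhr[n]\|^2-\delta^2\sum_{K\in\Th}\|\bu\cdot\nabla\uhr[n+1]\|^2_{K,\Lthm}$, so that the bad $\|\uhr[n+1]-\uhr[n]\|^2$ cancels the increment from the polarisation identity, and then use $\delta\le\nicefrac{\dt}{4}$ together with $\delta\sum_{K}\|\bu\cdot\nabla\uhr[n+1]\|^2_{K,\Lthm}\le\|\uhr[n+1]\|^2_{\mathrm{SUPG}}$ to convert the remainder into a controlled multiple of $\dt\|\uhr[n+1]\|^2_{\mathrm{SUPG}}$. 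For the bilinear form I would invoke coercivity, $\aSUPG(\uhr[n+1],\uhr[n+1])\ge\tfrac12\|\uhr[n+1]\|^2_{\mathrm{SUPG}}$, valid under~\eqref{eqn:deltaK-coerc}. Finally, on the right-hand side I would split $(f^{n+1},\uhr[n+1]+\delta\bu\cdot\nabla\uhr[n+1])$ into its two parts, bounding the first by Cauchy--Schwarz and $c_0\|\uhr[n+1]\|^2\le\|c^{\nicefrac12}\uhr[n+1]\|^2\le\|\uhr[n+1]\|^2_{\mathrm{SUPG}}$ with Young weight $4$ (which produces exactly $\nicefrac{4}{c_0}\,\dt\|f^{n+1}\|^2$), and the second by Cauchy--Schwarz and $\delta\sum_K\|\bu\cdot\nabla\uhr[n+1]\|^2_{K,\Lthm}\le\|\uhr[n+1]\|^2_{\mathrm{SUPG}}$ with Young weight $4$ (which produces $4\delta\,\dt\|f^{n+1}\|^2$); in both cases the $\|\uhr[n+1]\|^2_{\mathrm{SUPG}}$-remainders are absorbed into the dissipation from coercivity. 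Collecting everything yields the per-step inequality $\|\uhr[n+1]\|^2-\|\uhr[n]\|^2+\dt\|\uhr[n+1]\|^2_{\mathrm{SUPG}}\le\dt\big(\nicefrac{4}{c_0}+4\delta\big)\|f^{n+1}\|^2$, and summing over $n=0,\dots,N-1$ telescopes to~\eqref{eqn:stab-estim-dlr}.

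The main obstacle is the bookkeeping of constants: the two cross terms — the indefinite streamline part of the discrete time derivative and the $\delta$-weighted part of the forcing — must be absorbed into the single dissipation term $\dt\|\uhr[n+1]\|^2_{\mathrm{SUPG}}$ without spoiling its coefficient, and this is where both hypotheses on $\delta$ enter essentially: condition~\eqref{eqn:deltaK-coerc} through the coercivity estimate (and the companion bound $\|\vti+\delta\bu\cdot\nabla\vti\|\le2\|\vti\|$), and $\delta\le\nicefrac{\dt}{4}$ through the control of the antisymmetric time cross term. Tracking the three constituents of $\|\cdot\|^2_{\mathrm{SUPG}}$ — the $\varepsilon$-gradient, the streamline-diffusion, and the reaction part — separately rather than jointly is what makes the absorption tight enough; a joint bound loses too much and degrades the constant in front of $\dt\|\uhr[n]\|^2_{\mathrm{SUPG}}$.
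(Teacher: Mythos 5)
The paper itself does not prove this proposition (it is imported from~\cite{notr23}), but your strategy is clearly the intended one given the surrounding machinery: Proposition~\ref{prop:disc-varf} is stated precisely so that the scheme can be analysed variationally, and the crux of the argument is exactly the point you isolate first, namely that $\uhr[n+1]$ is an admissible test function because $\uhr[n+1]=\tilde U^{n+1}(Y^n)^{\top}+\tilde U^{n+1}(\tilde Y^{n+1}-Y^n)^{\top}$ decomposes into a ``physical'' tangent direction with coefficients $\tilde U^{n+1}_i\in V_h$ and a ``stochastic'' tangent direction whose increments lie in $\mathcal{Y}^{\perp}$ by Step~2, hence $\uhr[n+1]\in\Tn\mathcal{M}_R$. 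That observation, the polarisation identity, the coercivity bound $\aSUPG(\tilde u,\tilde u)\ge\tfrac12\|\tilde u\|^2_{\mathrm{SUPG}}$ under~\eqref{eqn:deltaK-coerc}, and the use of $\delta\le\nicefrac{\dt}{4}$ to tame the streamline part of the discrete time derivative are all correct and are where both hypotheses enter.

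The one concrete gap is in the final bookkeeping, which you yourself identify as the delicate point but then do not carry through consistently. After multiplying by $2\dt$, coercivity supplies $\dt\|\uhr[n+1]\|^2_{\mathrm{SUPG}}$ of dissipation, and each of your three absorptions --- the streamline time cross term via $\delta^2\sum_K\|\bu\cdot\nabla\uhr[n+1]\|^2_{K,\Lthm}\le\delta\|\uhr[n+1]\|^2_{\mathrm{SUPG}}\le\tfrac{\dt}{4}\|\uhr[n+1]\|^2_{\mathrm{SUPG}}$, and the two Young inequalities with weight $4$ that generate $\nicefrac{4}{c_0}\,\dt\|f^{n+1}\|^2$ and $4\delta\,\dt\|f^{n+1}\|^2$ --- consumes $\tfrac{\dt}{4}\|\uhr[n+1]\|^2_{\mathrm{SUPG}}$ of it. What survives per step is therefore $\tfrac{\dt}{4}\|\uhr[n+1]\|^2_{\mathrm{SUPG}}$, not the full $\dt\|\uhr[n+1]\|^2_{\mathrm{SUPG}}$ you write in your per-step inequality; with the right-hand-side constants fixed at $\nicefrac{4}{c_0}+4\delta$ there is no re-weighting that recovers coefficient $1$ on the dissipation sum by this route. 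So either the summed estimate should carry a factor $\nicefrac{1}{4}$ (or similar) in front of $\sum_n\dt\|\uhr[n]\|^2_{\mathrm{SUPG}}$ --- note that the quantity $\mathcal{E}^N$ in~\eqref{eqn:interp-err} does carry exactly such a $\nicefrac{\dt}{4}$ --- or the Young weights must be enlarged at the price of larger constants on the $f$-terms. Your derivation is sound up to that last line; the last line as stated does not follow from it.
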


\section{Error estimate}

The idea of the SUPG method is to skew the test space by~$\Hcal = (I + \delta \bu \cdot \nabla)$. Its ajoint is given by~$\Hcal^{*} = I - \delta \bu \cdot \nabla$ thanks to the zero-divergence of $\bu$. 
Denote $\mathcal{P}_{\mathcal{H}^*} : V_h \otimes \Lthm \rightarrow \mathcal{T}_u \mathcal{M}_R$ the oblique projection on the tangent space: 
\begin{equation} \label{eqn:skew-proj}
	(\mathcal{P}_{\mathcal{H}^{*}} \tilde{u}, \mathcal{H}^* w) = (\tilde{u}, \mathcal{H}^{*} w) \quad \forall w \in \mathcal{T}_u \mathcal{M}_R.
\end{equation}
Its well-posedness is ensured by the coercivity of $(u,\Hcal^{*}u) = \|u\|^2$ on $V_h \otimes \Lthm$. Hereafter, we use the shorthand notation 
$\vti^{\perp} \coloneqq \mathcal{P}_{\mathcal{H}^*}^{\perp} \tilde{v} = \vti - \mathcal{P}_{\mathcal{H}^*} \tilde{v}$ for any $\vti \in V_h \otimes L^2_{\hat{\mu}}(\hat{\Omega})$.
By definition of the projection, 
\begin{equation} \label{eqn:proj-orth}
(\mathcal{P}_{\mathcal{H}^*}^{\perp} \tilde{v}, \Hcal^{*} w) = 0 \quad \forall w \in \mathcal{T}_u \mathcal{M}_R.
\end{equation}
A useful property of the oblique projection is the following : 
\begin{lemma} \label{lem:obl-proj-bound}
	\begin{equation}
		\|I - \mathcal{P}_{{\Hcal}^{*}}  \| = \|\mathcal{P}_{{\Hcal}^{*}}
		 \| \leq 3. 
	\end{equation}
\end{lemma}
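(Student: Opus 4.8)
The plan is to use that $\mathcal{P}_{\Hcal^{*}}$ is a genuine oblique projection onto $\mathcal{T}_u\mathcal{M}_R$ and to bound its norm by testing the defining relation~\eqref{eqn:skew-proj} against the projected element itself. The equality $\|I-\mathcal{P}_{\Hcal^{*}}\|=\|\mathcal{P}_{\Hcal^{*}}\|$ will then come for free from the classical fact that a non-trivial bounded idempotent on a Hilbert space and its complementary projection have the same operator norm (here $V_h\otimes\Lthm$ is even finite-dimensional, so boundedness is automatic).

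First I would check idempotency. Taking $\tilde u=w\in\mathcal{T}_u\mathcal{M}_R$ in~\eqref{eqn:skew-proj} gives $(\mathcal{P}_{\Hcal^{*}}w-w,\Hcal^{*}z)=0$ for all $z\in\mathcal{T}_u\mathcal{M}_R$; since $\mathcal{P}_{\Hcal^{*}}w-w$ itself lies in $\mathcal{T}_u\mathcal{M}_R$, choosing $z=\mathcal{P}_{\Hcal^{*}}w-w$ and using the coercivity identity $(v,\Hcal^{*}v)=\|v\|^2$ recalled after~\eqref{eqn:skew-proj} forces $\mathcal{P}_{\Hcal^{*}}w=w$, i.e. $\mathcal{P}_{\Hcal^{*}}^2=\mathcal{P}_{\Hcal^{*}}$. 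Since $\mathcal{T}_u\mathcal{M}_R$ is a proper non-trivial subspace of $V_h\otimes\Lthm$, the norm identity for projections applies and yields the first equality.

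For the quantitative bound, fix $\tilde v\in V_h\otimes\Lthm$ and put $p\coloneqq\mathcal{P}_{\Hcal^{*}}\tilde v\in\mathcal{T}_u\mathcal{M}_R$. Using~\eqref{eqn:skew-proj} with $w=p$, then the coercivity identity and Cauchy--Schwarz, $\|p\|^2=(p,\Hcal^{*}p)=(\tilde v,\Hcal^{*}p)\le\|\tilde v\|\,\|\Hcal^{*}p\|$. Because $p\in V_h\otimes\Lthm$, the estimate behind $\|\tilde w+\delta\bu\cdot\nabla\tilde w\|\le 2\|\tilde w\|$ recorded right after~\eqref{eqn:deltaK-coerc} — which only uses the inverse inequality and the last bound in~\eqref{eqn:deltaK-coerc} — applies verbatim to $\Hcal^{*}=I-\delta\bu\cdot\nabla$, so $\|\Hcal^{*}p\|\le 2\|p\|$, hence $\|p\|\le 2\|\tilde v\|$, i.e. $\|\mathcal{P}_{\Hcal^{*}}\|\le 2$. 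By the triangle inequality $\|(I-\mathcal{P}_{\Hcal^{*}})\tilde v\|\le\|\tilde v\|+\|p\|\le 3\|\tilde v\|$, and combined with the norm identity this proves the statement (indeed with $3$ improvable to $2$).

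I do not expect a real obstacle; the only point deserving care is the norm identity for idempotents. If a self-contained argument is preferred, one can dispense with it altogether, since the two estimates $\|\mathcal{P}_{\Hcal^{*}}\|\le 2$ and $\|I-\mathcal{P}_{\Hcal^{*}}\|\le 3$ already suffice for every later use, and then recover the equality by the standard optimisation over unit vectors in the range and kernel of $\mathcal{P}_{\Hcal^{*}}$ — which also makes explicit that $\mathcal{T}_u\mathcal{M}_R$ must be a proper non-trivial subspace for the identity to be meaningful.
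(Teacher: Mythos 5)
Your proof is correct, but it takes a genuinely different (and in fact sharper) route than the paper. The paper introduces the \emph{orthogonal} projector $\Pi$ onto $\mathcal{T}_u\mathcal{M}_R$ and estimates the difference: using the coercivity identity $(v,\Hcal^*v)=\|v\|^2$ together with $\|\Hcal^*v\|\le 2\|v\|$, it shows $\|(\mathcal{P}_{\Hcal^*}-\Pi)\tilde u\|\le 2\|\Pi^{\perp}\tilde u\|$ and then concludes $\|\mathcal{P}_{\Hcal^*}\tilde u\|\le\|(\mathcal{P}_{\Hcal^*}-\Pi)\tilde u\|+\|\Pi\tilde u\|\le 3\|\tilde u\|$. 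You instead test the defining relation~\eqref{eqn:skew-proj} directly against $p=\mathcal{P}_{\Hcal^*}\tilde v$, which is the Lax--Milgram/C\'ea-style argument: $\|p\|^2=(\tilde v,\Hcal^*p)\le 2\|\tilde v\|\,\|p\|$, giving the better constant $\|\mathcal{P}_{\Hcal^*}\|\le 2$ from the same two ingredients (coercivity of $(\cdot,\Hcal^*\cdot)$ and the bound $\|\Hcal^*p\|\le 2\|p\|$, which indeed applies to $\Hcal^*=I-\delta\bu\cdot\nabla$ exactly as to $\Hcal$). What the paper's detour buys is the intermediate quasi-optimality estimate $\|(\mathcal{P}_{\Hcal^*}-\Pi)\tilde u\|\le 2\|\Pi^{\perp}\tilde u\|$, comparing the oblique projection to the best $L^2$-approximation, which is a potentially useful by-product but is not needed for the stated bound; what yours buys is a one-line argument and the constant $2$ in place of $3$. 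Your explicit verification of idempotency is a welcome addition (the paper takes it for granted), since it is what licenses the norm identity $\|I-\mathcal{P}_{\Hcal^*}\|=\|\mathcal{P}_{\Hcal^*}\|$ for the non-trivial projector; your caveat that $\mathcal{T}_u\mathcal{M}_R$ must be a proper non-trivial subspace for that identity is also correct and applies equally to the paper's citation of it.
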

\begin{proof}
	The first equality is a standard result of projectors~\cite{sz06}. 
	Consider the orthogonal projector $\Pi : V_h \otimes \Lthm \rightarrow \mathcal{T}_u \mathcal{M}_R$ verifying $(\Pi \tilde{u},  w) = (\tilde{u},  w)$ for $w \in \mathcal{T}_u \mathcal{M}_R$,
	we have
	\begin{multline}
		\|(\mathcal{P}_{\mathcal{H}^{*}} -  \Pi) \tilde{u}\|^2 =  ((\mathcal{P}_{\mathcal{H}^{*}} -  \Pi) \tilde{u}, (I - \delta \bu \nabla) (\mathcal{P}_{\mathcal{H}^{*}} -  \Pi) \tilde{u}) 
		\\ 
		= (\tilde{u} - \Pi\tilde{u}, (I - \delta \bu \nabla ) (\mathcal{P}_{\mathcal{H}^{*}} -  \Pi) \tilde{u} ) \leq 2 \|\Pi^{\perp} \tilde{u}\| \| (\mathcal{P}_{\mathcal{H}^{*}} -  \Pi) \tilde{u} \|  
	\end{multline}
	From there, we conclude $\|\mathcal{P}_{\mathcal{H}^{*}} \tilde{u} \| \leq \|(\mathcal{P}_{\mathcal{H}^{*}} - \Pi) \tilde{u} \| + \|\Pi \tilde{u}\| \leq 3 \|\tilde{u}\|$.
\end{proof}

We will make use of the following assumptions to analyse the convergence of the SUPG-DLR method. 
The first is the standard Model Error Assumption, particularised to the SUPG-context. It asks that the dynamics neglected by the DLR approximation is negligible. 
This is a standard assumption made to analyse the convergence of DLR approximations
~\cite{celu22,luki16,kolu07}. 
\begin{assumption} (Model Error Assumption) For $n = 0, \ldots, N-1$, let $\hat{u}^n = \tilde{U}^{n+1} Y^n$ be the ``intermediate'' point obtained by Algorithm~\ref{alg:evaspl}. For $\nu \ll 1$, it holds
\begin{equation} \label{eqn:model-error}
	|\aSUPG(\hat{u}^n, v^{\perp}_{\hhmu}) - (f, \Hcal v^{\perp}_{\hhmu})| \leq \nu \|\tilde{v}\|,  \quad \forall{\tilde{v}} \in V_h \otimes \Lthm, 
	\quad \mathrm{for}\; \nu \ll 1.
\end{equation}
\end{assumption}
The second is an assumption on the $H^1$-stability of the physical basis. 
\begin{assumption} \label{ass:lbi} (Local basis inverse inequality)
	Given the DLR iterates $\{\uhr[n] = \tilde{U}^{n} \tilde{Y}^{n}\}_{n=1}^{N}$ obtained via Algorithm~\ref{alg:evaspl}, and denoting $(\mathbb{S}_n)_{ij} = ( \nabla \tilde{U}^{n+1}_i, \nabla \tilde{U}^{n+1}_j)_{L^2(D)}$ and $(\mathbb{M}_n)_{ij} = (\tilde{U}^{n+1}_i, \tilde{U}^{n+1}_j)_{L^2(D)}$ the stiffness and mass matrices associated to the physical basis $\{\tilde{U}^n_i\}_{i=1}^R$,	there exists a constant $\Clocbasinv < \infty$ such that 
	\begin{equation}
		\max_{n = 0, \ldots, N}
		\left(
		\sup_{x \in \mathbb{R}^R}
		\frac{x^{\top} \mathbb{S}_n x}{x^{\top}\mathbb{M}_n x}\right)
		\leq \Clocbasinv.
	\end{equation}
\end{assumption}
\noindent The functions $(U^n_1, \ldots, U^n_R)$ are typically globally supported and display regularity, justifying a moderate value for $\Clocbasinv$. 
	Assumption~\ref{ass:lbi} implies that, for any $n \geq 0$,
	\begin{equation} \label{eqn:inv-ineq-subspace}
		\| \nabla \tilde{U}^{n} Z^{\top} \| \leq \Clocbasinv \|\tilde{U}^{n} Z^{\top} \| \quad \mathrm{for} \; Z  \in [\Lthm]^R.
	\end{equation}

	The elliptic projection operator $\pi : L^2_{\hmu}(\hat{\Omega}, H^1_0(D)) \rightarrow V_h \otimes \Lthm$ is defined by
\begin{equation} \label{eqn:elliptic-proj}
	(\nabla (u - \pi \, u), \nabla v_{h,\hmu}) = 0, \quad \forall v_{h,\mu} \in V_h \otimes \Lthm.
\end{equation}
For brevity, denote $\pi^{n} u = \pi u(t_{n})$. 
We split $\uhr[n] - u(t_{n}) = (\uhr[n] - \pi^{n} u) + (\pi^{n} u - u(t_{n})) = \tilde{e}^{n} + \eta^{n}$. 
The interpolation error $\eta^{n}$ is bounded using standard estimates which, assuming $u(t_n) \in L^2_{\hmu}(H^{k+1})$ for any $n$, yields 
(see e.g.~\cite{quva08})

\begin{equation} \label{eqn:interp-err}
\mathcal{E}^{N}(\eta) \coloneqq 
	\|\eta^{N}\|^2 +
	\frac{\dt}{4} \sum_{j=1}^{N} \|\eta^{j}\|_{\mathrm{SUPG}}^2 
\lesssim 
h^{2k+1}.
\end{equation}
For the other error term, Proposition~\ref{prop:disc-varf} allows to derive 
\begin{multline} \label{eqn:err-eqn}
	\dt^{-1}\lrp{\tilde{e}^{n+1} - \tilde{e}^n, \vti} 
	+ \aSUPG(\tilde{e}^{n+1}, \vti) 
	= 
	\aSUPG(u(t_{n+1}) - \piun[n+1], \vti)
	\\
	+ (\dot{u}(t_{n+1}) -  \dt^{-1} (\piun[n+1] - \piun[n]), \Hcal \vti)
	- \delta \dt^{-1} ( \tilde{e}^{n+1} - \tilde{e}^n, \bu \cdot \nabla \vti)
	+ \aSUPG(\hat{u}^n, \vti^{\perp}) 
	\\
	- (f^{n+1}, \Hcal \vti^{\perp})
	- \aSUPG(\hat{u}^n - \uhr[n+1], \vti^{\perp})
	+ \dt^{-1}\lrp{\uhr[n+1] - \uhr[n], \Hcal \vti^{\perp}}, \quad \forall \vti \in V_h \otimes \Lthm.
\end{multline}
Note that the last term in~\eqref{eqn:err-eqn} vanishes by~\eqref{eqn:proj-orth}. 
One last technical lemma is needed before presenting the main result:
\begin{lemma} \label{lem:bound-UdeltaY} Let $\tilde{\delta}Y^n \coloneqq \tilde{Y}^{n+1} - Y^n$. It holds
	\begin{equation*} \label{eqn:udeltay-ineq}
		\dt^{-1}		\|\tilde{U}^{n+1} \tilde{\delta} Y^n\|^2 = \aSUPG(\uhr[n+1],\tilde{U}^{n+1} \tilde{\delta} Y^n) + (f^{n+1}, \Hcal \tilde{U}^{n+1} \tilde{\delta} Y^n ).
	\end{equation*}
\end{lemma}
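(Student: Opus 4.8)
The plan is to read the identity straight off Step~2 of Algorithm~\ref{alg:evaspl}, by testing the $j$-th equation~\eqref{eqn:PG-disc-varf-stochmodes-3} with $z = \tilde{\delta} Y^n_j$ and summing over $j = 1,\dots,R$. This test function is admissible, and in fact convenient, precisely because each increment $\tilde{\delta} Y^n_j = \tilde{Y}^{n+1}_j - Y^n_j$ lies in $\mathcal{Y}^{\perp}$ by construction, so that $\mathcal{P}^{\perp}_{\mathcal{Y}} \tilde{\delta} Y^n_j = \tilde{\delta} Y^n_j$ and $\mathbb{E}_{\hmu}[(\tilde{Y}^{n+1}_i - Y^n_i)\,\tilde{\delta} Y^n_j] = \mathbb{E}_{\hmu}[\tilde{\delta} Y^n_i \tilde{\delta} Y^n_j]$. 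After summing, bilinearity of $\aSUPG$ collapses the stiffness terms into $\aSUPG(\uhr[n+1], \tilde{U}^{n+1}\tilde{\delta} Y^n)$, and — using that $\bu \cdot \nabla$ acts only on the spatial factors, so that $\sum_j \delta\,\bu \cdot \nabla \tilde{U}^{n+1}_j\, \tilde{\delta} Y^n_j = \delta\,\bu \cdot \nabla(\tilde{U}^{n+1}\tilde{\delta} Y^n)$ — the source terms collapse into $(f^{n+1}, \Hcal\,\tilde{U}^{n+1}\tilde{\delta} Y^n)$.

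It then remains to identify the time-derivative term, i.e.\ to show $\sum_{i,j}\mathbb{E}_{\hmu}[\tilde{\delta} Y^n_i \tilde{\delta} Y^n_j]\, \tilde{W}^{n+1}_{ij} = \|\tilde{U}^{n+1}\tilde{\delta} Y^n\|^2$. I would split $\tilde{W}^{n+1}_{ij} = (\mathbb{M}_n)_{ij} + \delta(\tilde{U}^{n+1}_i, \bu \cdot \nabla \tilde{U}^{n+1}_j)_{L^2(D)}$. The mass-matrix part contracted against $\mathbb{E}_{\hmu}[\tilde{\delta} Y^n_i \tilde{\delta} Y^n_j]$ is, upon expanding the $L^2_{\hmu}(L^2(D))$ inner product, exactly $\|\sum_i \tilde{U}^{n+1}_i \tilde{\delta} Y^n_i\|^2 = \|\tilde{U}^{n+1}\tilde{\delta} Y^n\|^2$, so the whole matter reduces to showing that the streamline-diffusion contribution $\delta \sum_{i,j}\mathbb{E}_{\hmu}[\tilde{\delta} Y^n_i \tilde{\delta} Y^n_j]\,(\tilde{U}^{n+1}_i, \bu \cdot \nabla \tilde{U}^{n+1}_j)_{L^2(D)}$ vanishes.

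That cancellation is the one genuine step, and the point I would flag as the main obstacle. The correlation matrix $\big(\mathbb{E}_{\hmu}[\tilde{\delta} Y^n_i \tilde{\delta} Y^n_j]\big)_{ij}$ is symmetric, whereas $\big((\tilde{U}^{n+1}_i, \bu \cdot \nabla \tilde{U}^{n+1}_j)_{L^2(D)}\big)_{ij}$ is antisymmetric: integrating by parts and using $\mathrm{div}\,\bu = 0$ together with $\tilde{U}^{n+1}_k \in V_h \subset H^1_0(D)$ gives $(\tilde{U}^{n+1}_i, \bu \cdot \nabla \tilde{U}^{n+1}_j)_{L^2(D)} + (\tilde{U}^{n+1}_j, \bu \cdot \nabla \tilde{U}^{n+1}_i)_{L^2(D)} = \int_D \bu \cdot \nabla(\tilde{U}^{n+1}_i \tilde{U}^{n+1}_j)\,\mathrm{d}x = 0$. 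Since the full double contraction of a symmetric matrix with an antisymmetric one is zero, the $\delta$-term drops. Collecting the three computed contributions then yields $\dt^{-1}\|\tilde{U}^{n+1}\tilde{\delta} Y^n\|^2 + \aSUPG(\uhr[n+1], \tilde{U}^{n+1}\tilde{\delta} Y^n) = (f^{n+1}, \Hcal\,\tilde{U}^{n+1}\tilde{\delta} Y^n)$, which is the asserted identity (up to the placement of the $\aSUPG$ term, which I would reconcile against the sign convention in~\eqref{eqn:PG-disc-varf-stochmodes-3}). Apart from this symmetry argument, everything is a bookkeeping exercise of testing and re-summing.
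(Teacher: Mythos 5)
Your proof is correct and follows essentially the same route as the paper's: test~\eqref{eqn:PG-disc-varf-stochmodes-3} with $z_j = \tilde{\delta}Y^n_j \in \mathcal{Y}^{\perp}$, sum over $j$, and observe that the streamline part of $\tilde{W}^{n+1}_{ij}$ contracted against the symmetric correlation matrix vanishes by the divergence-free/antisymmetry argument (the paper compresses this into the parenthetical ``by zero-divergence of $\bu$'', which you usefully spell out). The sign placement you flag is a genuine inconsistency in the paper itself --- tracing~\eqref{eqn:PG-disc-varf-stochmodes-3} literally puts $-\aSUPG$ on the right-hand side, not $+\aSUPG$ --- but it is immaterial to how the lemma is used in Theorem~\ref{thm:err-estim}, where only the resulting bound on $\|\tilde{U}^{n+1}\tilde{\delta}Y^n\|$ matters.
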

\begin{proof}
	Start from~\eqref{eqn:PG-disc-varf-stochmodes-3}. Using the definition of $\tilde{W}^{n+1}_{ij}$, we rewrite it as 
	\begin{multline*}
		\dt^{-1}	(\sum_{i=1}^R \tilde{U}^{n+1}_i\tilde{\delta}Y^n_j , \tilde{U}^{n+1}_j z_j + \delta \bu \cdot \nabla\tilde{U}^{n+1}_j z_j ) 
		= 
		\aSUPG(\uhr[n+1], \tilde{U}^{n+1}_j \mathcal{P}^{\perp}_{\mathcal{Y}} z_j) 
		\\
		+ (f^{n+1}, \tilde{U}^{n+1}_{j} \mathcal{P}^{\perp}_{\mathcal{Y}} z_j + \delta \bu \cdot \nabla\tilde{U}^{n+1}_j \mathcal{P}^{\perp}_{\mathcal{Y}} z_j   )
		\quad \mathrm{for} \, j \in 1, \ldots,R, \forall z_j \in \Lthm.
	\end{multline*}
	Set $z_j = \tilde{Y}^{n+1}_j - Y^n_j$, the result is obtained by summing over $j$ since $\tilde{Y}^{n+1} - Y^n \in (\mathcal{Y}^n)^{\perp}$ (the l.h.s. becomes
	$\|\tilde{U}^{n+1} \tilde{\delta} Y^n\|^2$ by zero-divergence of $\bu$).
\end{proof}

\begin{theorem} \label{thm:err-estim}
	Let $\bu \in (L^{\infty}(D))^d$ such that $\mathrm{div}\bu = 0$, $c \in L^{\infty}_{\hmu}(L^{\infty}(D))$ and assume the true solution verifies $u, \partial_t u \in L^{\infty}(0,T; L^\infty_{\hmu}(H^{k+1}(D)))$, $\partial_t^2 u \in L^2(0,T; L^{\infty}_{\hmu}(H^1))$. 
	Under~(\hyperlink{coefa}{\textsc{CoefA}}),~\eqref{eqn:model-error},~\eqref{eqn:deltaK-coerc} as well as~$\delta \leq \nicefrac{\dt}{4}$,
	the DLR iterates $\{\uhr[n]\}_{n=0}^{N}$ of Algorithm~\ref{alg:evaspl}	satisfy 
	\begin{multline}
		\|u(t_N) - \uhr[N]\| + \left(\sum_{i=1}^N \dt \|u(t_i) - \uhr[i]\|^2_{\mathrm{SUPG}}\right)^{\nicefrac{1}{2}}
		\\
	\lesssim h^{k+1} + \dt + \delta^{\nicefrac{1}{2}} h^{k}\ + \delta^{-\nicefrac{1}{2}}h^{k+1}  + \|\pi^0 u - \uhr[0]\| + \nu.
	\end{multline}
\end{theorem}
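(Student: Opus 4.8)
The plan is to run a discrete energy argument on the error equation~\eqref{eqn:err-eqn}. Recall $u(t_n) - \uhr[n] = \eta^n + \tilde{e}^n$ with $\eta^n = \piun[n] - u(t_n)$; the $\eta$-contribution to the two norms in the statement is handled by~\eqref{eqn:interp-err} and standard interpolation estimates, noting that $\|\eta^N\| \lesssim h^{k+1}$ while $\big(\dt\sum_j \|\eta^j\|_{\mathrm{SUPG}}^2\big)^{1/2} \lesssim h^{k+1/2} \le \tfrac12\big(\delta^{1/2}h^{k} + \delta^{-1/2}h^{k+1}\big)$ by the arithmetic--geometric mean inequality (since $(\delta^{1/2}h^{k})(\delta^{-1/2}h^{k+1}) = h^{2k+1}$). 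By the triangle inequality it therefore suffices to bound $\tilde{e}^n$. To that end I test~\eqref{eqn:err-eqn} with $\tilde{v} = \tilde{e}^{n+1}$ and multiply by $\dt$. On the left-hand side, the polarisation identity $(\tilde{e}^{n+1}-\tilde{e}^n,\tilde{e}^{n+1}) = \tfrac12\big(\|\tilde{e}^{n+1}\|^2 - \|\tilde{e}^n\|^2 + \|\tilde{e}^{n+1}-\tilde{e}^n\|^2\big)$ and the coercivity $\dt\,\aSUPG(\tilde{e}^{n+1},\tilde{e}^{n+1}) \ge \tfrac{\dt}{2}\|\tilde{e}^{n+1}\|_{\mathrm{SUPG}}^2$ turn the left-hand side, after summation in $n$, into a quantity controlling $\tfrac12\|\tilde{e}^N\|^2 + \tfrac{\dt}{2}\sum_n \|\tilde{e}^n\|_{\mathrm{SUPG}}^2 - \tfrac12\|\tilde{e}^0\|^2$. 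Since $c \ge c_0 > 0$, the piece $\|c^{1/2}\tilde{e}^{n+1}\|^2 \ge c_0\|\tilde{e}^{n+1}\|^2$ inside $\|\tilde{e}^{n+1}\|_{\mathrm{SUPG}}^2$ provides genuine $\ell^2$-damping, so every contribution of the form (small constant)$\,\times\,\dt\|\tilde{e}^{n+1}\|^2$ produced by Young's inequality can be absorbed on the left \emph{without} a discrete Gr\"onwall lemma (this is also what keeps the $\nu$ term from picking up a negative power of $\varepsilon$).

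Now to the seven right-hand side terms of~\eqref{eqn:err-eqn}, from left to right. The last one vanishes by~\eqref{eqn:proj-orth}. In $\aSUPG(u(t_{n+1}) - \piun[n+1],\tilde{e}^{n+1}) = -\aSUPG(\eta^{n+1},\tilde{e}^{n+1})$ the diffusion part $(\varepsilon\nabla\eta^{n+1},\nabla\tilde{e}^{n+1})$ vanishes by~\eqref{eqn:elliptic-proj}; the advection part is integrated by parts using $\mathrm{div}\,\bu = 0$ into $-(\eta^{n+1},\bu\cdot\nabla\tilde{e}^{n+1})$ and estimated as $\delta^{-1/2}\|\eta^{n+1}\|\cdot\delta^{1/2}\|\bu\cdot\nabla\tilde{e}^{n+1}\|$, whose time-square-summed first factor yields the $\delta^{-1/2}h^{k+1}$ rate and whose second factor is absorbed by the streamline part of $\|\tilde{e}^{n+1}\|_{\mathrm{SUPG}}^2$; the reaction part $(c\eta^{n+1},\tilde{e}^{n+1})$ gives $h^{k+1}$; and the stabilisation sum $\delta\sum_K(-\varepsilon\Delta\eta^{n+1} + \bu\cdot\nabla\eta^{n+1} + c\eta^{n+1},\bu\cdot\nabla\tilde{e}^{n+1})_{K}$ gives $\delta^{1/2}h^{k}$ (for the second-derivative piece one combines the interpolation estimate for $\pi$ with the mesh-P\'eclet bound $\varepsilon < \tfrac12\|\bu\|_{L^\infty}h$). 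The time-consistency term $(\dot{u}(t_{n+1}) - \dt^{-1}(\piun[n+1] - \piun[n]),\Hcal\tilde{e}^{n+1})$ is split, using that $\pi$ commutes with $\partial_t$, into $(\dot{u}(t_{n+1}) - \dt^{-1}(u(t_{n+1}) - u(t_n)),\Hcal\tilde{e}^{n+1})$ --- bounded by Taylor's theorem and $\|\Hcal\tilde{e}^{n+1}\| \le 2\|\tilde{e}^{n+1}\|$, giving $\dt$ --- and $-\dt^{-1}(\eta^n - \eta^{n+1},\Hcal\tilde{e}^{n+1})$, giving $h^{k+1}$. Finally $-\delta\dt^{-1}(\tilde{e}^{n+1}-\tilde{e}^n,\bu\cdot\nabla\tilde{e}^{n+1})$, after the multiplication by $\dt$, is bounded by $\delta^{1/2}\|\tilde{e}^{n+1}-\tilde{e}^n\|\cdot\delta^{1/2}\|\bu\cdot\nabla\tilde{e}^{n+1}\|$; a Young inequality with weight of order $\dt$ sends the first factor into $\tfrac12\|\tilde{e}^{n+1}-\tilde{e}^n\|^2$ (the remainder is $\le\tfrac{\delta}{\dt}\|\tilde{e}^{n+1}-\tilde{e}^n\|^2 \le \tfrac14\|\tilde{e}^{n+1}-\tilde{e}^n\|^2$ because $\delta \le \dt/4$) and the second into the streamline part of the $\mathrm{SUPG}$-norm.

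It remains to handle the three genuinely ``DLR'' terms. The combination $\aSUPG(\hat{u}^n,(\tilde{e}^{n+1})^{\perp}) - (f^{n+1},\Hcal(\tilde{e}^{n+1})^{\perp})$, where $(\tilde{e}^{n+1})^{\perp} = \mathcal{P}_{\mathcal{H}^{*}}^{\perp}\tilde{e}^{n+1}$, is exactly what the Model Error Assumption~\eqref{eqn:model-error} bounds by $\nu\|\tilde{e}^{n+1}\|$, contributing the $\nu$ term. For $-\aSUPG(\hat{u}^n - \uhr[n+1],(\tilde{e}^{n+1})^{\perp})$, note $\hat{u}^n - \uhr[n+1] = \tilde{U}^{n+1}(Y^n - \tilde{Y}^{n+1})^{\top} = -\tilde{U}^{n+1}\tilde{\delta}Y^n$, so by Lemma~\ref{lem:supg-bounds}, the local inverse inequality~\eqref{eqn:inv-ineq-subspace} and Lemma~\ref{lem:obl-proj-bound} (which gives $\|(\tilde{e}^{n+1})^{\perp}\| \le 3\|\tilde{e}^{n+1}\|$) this term is $\lesssim \Clocbasinv\|\tilde{U}^{n+1}\tilde{\delta}Y^n\|\,\|\tilde{e}^{n+1}\|$; Lemma~\ref{lem:bound-UdeltaY}, together with $\|\nabla\uhr[n+1]\| \le \Clocbasinv\|\uhr[n+1]\|$ and the a priori stability estimate of Proposition~\ref{prop:stab}, yields $\|\tilde{U}^{n+1}\tilde{\delta}Y^n\| \lesssim \dt\,(1 + \|f^{n+1}\|)$, so this term contributes $\dt$. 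The remaining term $\dt^{-1}(\uhr[n+1] - \uhr[n],\Hcal(\tilde{e}^{n+1})^{\perp})$ vanishes by~\eqref{eqn:proj-orth}. Summing over $n = 0,\dots,N-1$, absorbing the small-$\dt\|\tilde{e}^{n+1}\|^2$ and streamline-derivative remainders on the left, taking square roots, and recalling $\|\tilde{e}^0\| = \|\pi^0 u - \uhr[0]\|$, gives the claimed bound for $\tilde{e}$; adding the $\eta$-bound finishes the proof.

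The main obstacle is this last group of terms: the splitting nature of Algorithm~\ref{alg:evaspl} leaves the extra term $\aSUPG(\hat{u}^n - \uhr[n+1],(\tilde{e}^{n+1})^{\perp})$ in the error equation, absent from the classical SUPG-FEM analysis, and pinning it down at order $\dt$ requires simultaneously using three a priori independent facts --- the identity of Lemma~\ref{lem:bound-UdeltaY} for the stochastic increment $\tilde{U}^{n+1}\tilde{\delta}Y^n$, the $H^1$-stability of the physical basis (Assumption~\ref{ass:lbi}, used to convert $V_h$-norms of rank-$R$ functions into $L^2$-norms), and the norm-stability of the scheme (Proposition~\ref{prop:stab}). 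A secondary, but pervasive, difficulty is the bookkeeping of the stabilisation parameter $\delta$: deciding term by term whether to integrate by parts (trading $h^{k}$ against $\delta^{-1/2}h^{k+1}$), routing every streamline contribution $\delta\|\bu\cdot\nabla\tilde{e}^{n+1}\|^2$ into the $\mathrm{SUPG}$-coercivity budget, and exploiting $\delta \le \dt/4$ to dispose of the time-derivative cross term $\delta\dt^{-1}(\tilde{e}^{n+1}-\tilde{e}^n,\bu\cdot\nabla\tilde{e}^{n+1})$ coming from the skewed test space.
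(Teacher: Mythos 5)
Your proposal is correct and follows essentially the same route as the paper's proof: the same $\eta^n+\tilde{e}^n$ splitting, testing the error equation~\eqref{eqn:err-eqn} with $\tilde{e}^{n+1}$, the same counter-integration/absorption bookkeeping for the stabilisation terms, and the same treatment of the DLR-specific terms via~\eqref{eqn:model-error}, Lemma~\ref{lem:bound-UdeltaY}, Assumption~\ref{ass:lbi} and Proposition~\ref{prop:stab}. The only (welcome) addition is your explicit AM--GM step absorbing the $h^{k+1/2}$ interpolation contribution into $\delta^{1/2}h^{k}+\delta^{-1/2}h^{k+1}$, a detail the paper leaves implicit.
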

\begin{proof}
	The proof largely follows the structure of the proof in~\cite[pp. 10-12]{jovo11}. 
	Testing against $\tilde{e}^{n+1}$, the first two terms in the r.h.s of~\eqref{eqn:err-eqn} verify 
\begin{multline*}	
	\aSUPG(u(t_{n+1}) - \pi^{n+1} u, \tilde{e}^{n+1}) 
	+ (\dot{u}(t_{n+1}) - \dt^{-1} (\pi^{n+1} u - \pi^n u), \Hcal \tilde{e}^{n+1}) 
	\\
	= 
	\delta \sum_{K \in \Th} (\tilde{T}^{n+1}_{\mathrm{stab},K}, \bu \cdot \nabla \tilde{e}^{n+1})_{K,\Lthm}
	+ (T^{n+1}_{\mathrm{zero}}, \tilde{e}^{n+1}) 
	+ (T^{n+1}_{\mathrm{conv}}, \tilde{e}^{n+1}),
\end{multline*}
where 
\begin{align*}
	T^{n+1}_{\mathrm{zero}} &= (\dot{u}(t_{n+1}) - \pi^{n+1}\dot{u}) + c(u(t_{n+1}) - \pi^{n+1} u) + \left( \pi^{n+1} \dot{u} - \frac{\pi^{n+1}u - \pi^n u}{\dt} \right), \\
	T^{n+1}_{\mathrm{conv}} &= \bu \cdot \nabla (u(t_{n+1}) - \pi^{n+1} u), \\
	\tilde{T}^{n+1}_{\mathrm{stab},K} &= \left(T^{n+1}_{\mathrm{zero}} + T^{n+1}_{\mathrm{conv}} + \varepsilon \Delta (\pi^{n+1} u - u(t_{n+1})) \right)_{\rvert K}.
\end{align*}
Counter-integrating $(T^{n+1}_{\mathrm{conv}}, \tilde{e}^{n+1})$ and using the zero-divergence of $\bu$ yields
\begin{equation*}
	(T^{n+1}_{\mathrm{conv}}, \tilde{e}^{n+1}) = - \delta\sum_{K \in \Th}  \left( \delta^{-1}(\pi^{n+1} u - u(t_{n+1})) , \bu \cdot \nabla \tilde{e}^{n+1}\right)_{K,\Lthm},
\end{equation*}
which can then be included in $T^{n+1}_{\mathrm{stab},K}$, defining
\begin{equation*}
	T^{n+1}_{\mathrm{stab},K} = \tilde{T}^{n+1}_{\mathrm{stab},K} - \delta^{-1} (\pi^{n+1} u - u(t_{n+1})).
\end{equation*}
We then bound the terms \textit{via} Young's inequality, suitably balancing the coefficients such that the $\tilde{e}^{n+1}$-quantities on the r.h.s can be absorbed by $\frac{1}{2} \|e^{n+1}\|_{\mathrm{SUPG}}^2$ on the l.h.s. 
To this end, let $0 < \gamma \leq \nicefrac{1}{16}$.
As $(2\dt)^{-1} (\|\tilde{e}^{n+1}\|^2 - \|\tilde{e}^n\|^2 + \|\tilde{e}^{n+1} - e^n\|^2) + \nicefrac{1}{2} \|\tilde{e}^{n+1}\|^2_{\mathrm{SUPG}}$ lower-bounds the l.h.s of~\eqref{eqn:err-eqn}, it holds 
\begin{multline*}	
	(2\dt)^{-1} (\|\tilde{e}^{n+1}\|^2 - \|\tilde{e}^n\|^2 + \|\tilde{e}^{n+1} - \tilde{e}^n\|^2) + \nicefrac{1}{2} \|\tilde{e}^{n+1}\|^2_{\mathrm{SUPG}} 
	\\
	\leq 
	\delta \sum_{K \in \Th} (T^{n+1}_{\mathrm{stab},K}
	- 
	\dt^{-1} (\tilde{e}^{n+1} - \tilde{e}^n), \bu \cdot \nabla \tilde{e}^{n+1})_{K,\Lthm}
	+ (T^{n+1}_{\mathrm{zero}}, \tilde{e}^{n+1}) 
  \\
	+ \aSUPG(\uhr[n+1] - \hat{u}^n, (\tilde{e}^{n+1})^{\perp}) 
	+ \aSUPG(\hat{u}^n, (\eti{n+1})^{\perp}) 
	- (f^{n+1}, \Hcal (\eti{n+1})^{\perp})
	\\
	\leq 
 C	\delta \sum_{K \in \Th} \| T^{n+1}_{\mathrm{stab},K} \|_{K,\Lthm}^2
	+ \delta \gamma \| \bu \cdot \nabla \tilde{e}^{n+1}\|^2_{K,\Lthm}
	+ 
	C \dt^{-1} \| \tilde{e}^{n+1} - \tilde{e}^n \|^2 	+ C \| T^{n+1}_{\mathrm{zero}}\|^2 
	\\
	+  \gamma \|\tilde{e}^{n+1}\|^2
	+ \aSUPG(\uhr[n+1] - \hat{u}^n, (\eti{n+1})^{\perp}) 
	+ \aSUPG(\hat{u}^n, (\eti{n+1})^{\perp}) 
- (f^{n+1}, \Hcal (\eti{n+1})^{\perp}),
\end{multline*}
having used $\delta \lesssim \dt$ in the last inequality, and where $C$ depends on $\gamma^{-1}$. Lemma~\ref{lem:bound-UdeltaY} with~\eqref{eqn:inv-ineq-subspace} and~\eqref{eqn:model-error} respectively yield
\begin{align*}
	& \aSUPG(\tilde{U}^{n+1} \tilde{\delta} Y, (\eti{n+1})^{\perp}) 
	\lesssim \|\tilde{U}^{n+1} \tilde{\delta} Y\| \|\tilde{e}^{n+1} \| 
	\leq C \dt^2 (\|\uhr[n+1]\|^2 + \|f\|^2) + \gamma \|\tilde{e}^{n+1}\|^2, 
	\\
	& \aSUPG(\hat{u}^n, (\eti{n+1})^{\perp}) - (f^{n+1}, \Hcal (\eti{n+1})^{\perp}) 
	\leq \nu \| \tilde{e}^{n+1} \| 
	\leq C \nu^2 + \gamma \| \tilde{e}^{n+1} \|^2.
\end{align*}
Note that $\sum_{j=0}^{N-1} \dt^2 (\|\uhr[j]\|^2 + \|f\|^2) \lesssim \dt$ by Proposition~\ref{prop:stab}.
Cancelling, rearranging a few terms and summing over $j=0,\ldots,N-1$, we obtain
\begin{multline*}
	\|\tilde{e}^{n+1}\|^2 + \sum_{n=1}^N \|\tilde{e}^{n}\|_{\mathrm{SUPG}}^2 \lesssim \|\tilde{e}^0\|^2 + {\dt}\sum_{n=1}^N \|T_{\mathrm{zero}}^{n}\|^2 + \dt \sum_{n=1}^N \sum_{K \in \Th} \delta_K \| T^{n}_{\mathrm{stab},K}\|^2_K + \nu^2 + \dt^2. 
\end{multline*}
As in~\cite{jovo11}, the regularity assumptions on $u$ and its derivatives allow to bound
\begin{multline*}
\dt \sum_{n = 1}^N \|T^n_{\mathrm{zero}}\|^2 + \dt \sum_{n=1}^N \sum_{K \in \Th} \delta \|T^n_{\mathrm{stab},K}\|^2 \lesssim h^{2k + 2} + \dt^2 + \delta h^{2k} + h^{2k+2} \delta^{-1}.
\end{multline*}
Denoting $\gamma^n \coloneqq \uhr[n] - u(t_n)$, the claim follows as
$\mathcal{E}^N(\gamma) \lesssim \mathcal{E}^{N}(\tilde{e}) + \mathcal{E}^N(\eta)$. 


\end{proof}

\section{Numerical experiments}

We solve problem~\eqref{eqn:adv-diff-reac} on~$D = [0,1]$ with 
\begin{align*}
  \varepsilon = 10^{-8}, && \bu = 1, && c(x,\omega) = 1 + \omega, && \omega \sim \mathcal{U}[0,1]
\end{align*}
and choose the right-hand-side such that the true solution is given by
\begin{equation} \label{eqn:xp-true-sol}
  u_{\mathrm{true}}(t, x, \omega) = e^{x \sin(2 \pi \omega (t+1))} \sin(2\pi x). 
\end{equation}
The stochasticity therefore resides in the initial conditions, reaction term and forcing term. 
The sample space $\Omega = [0,1]$ is then approximated with the discrete set $\hat{\Omega} = \left\{ \nicefrac{i}{N_C} \right\}_{i=1}^{N_C}$ with $N_C = 15$ and equal probabilities in all the sample points. 
The physical space is discretised using a regular mesh with increasingly fine mesh size $h_i \sim 2^{-i}$. For the initial conditions, we compute a (generalised) SVD of~\eqref{eqn:xp-true-sol} at time $t=0$. 

As in~\cite{jovo11}, the terms $\dt$, $\delta^{\nicefrac{1}{2}}h^k$ and $h^{k+1} \delta^{-\nicefrac{1}{2}}$ in the error estimate need to be balanced to yield the best possible decay rate for a fixed $h$. Since Proposition~\ref{prop:stab} requires $\delta \sim \dt$, this imposes the condition $\dt \sim \mathcal{O}(h^{\frac{2(k+1)}{3}})$.

For the simulations we do not use the implicit scheme, but a semi-implicit version close to it that is both more technical and practical (reyling on a slightly different parametrisation of the approximation manifold with isolated mean, see~\cite{notr23}). With some technical details, the results carry over for that time-stepping scheme too.

In the first numerical experiment, we choose a rank $R = 6$ to ensure the error associated to the rank truncation is negligible. The rates observed in Figure~\ref{fig:supg-err-rank-6} are those predicted by Theorem~\ref{thm:err-estim}, both for the $L^2_{\hmu}(L^2(D))$ and the SUPG error. Figures~\ref{fig:supg-err-rank-deg-1} and~\ref{fig:supg-err-rank-deg-2} display the errors of DLR approximations computed with $R = 1,2,3$. The error is quasi-optimal with respect to the error obtained when using the optimal rank-$R$ truncation.

\begin{figure}
\centering
\begin{subfigure}{.32\textwidth}
  \centering
  \includegraphics[width=\linewidth]{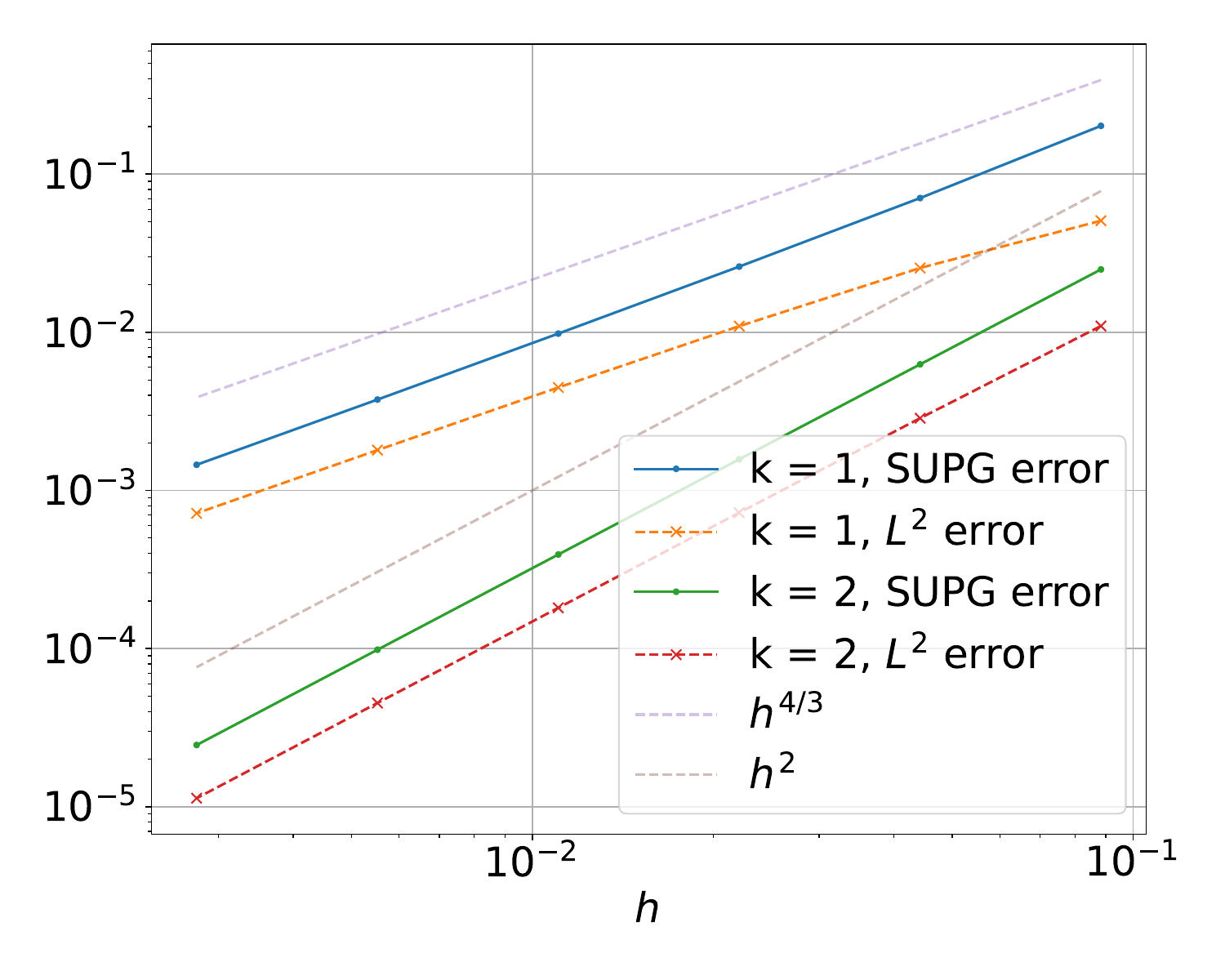}
  \caption{Rank $R = 6$}
  \label{fig:supg-err-rank-6}
\end{subfigure}%
\begin{subfigure}{.32\textwidth}
  \centering
  \includegraphics[width=\linewidth]{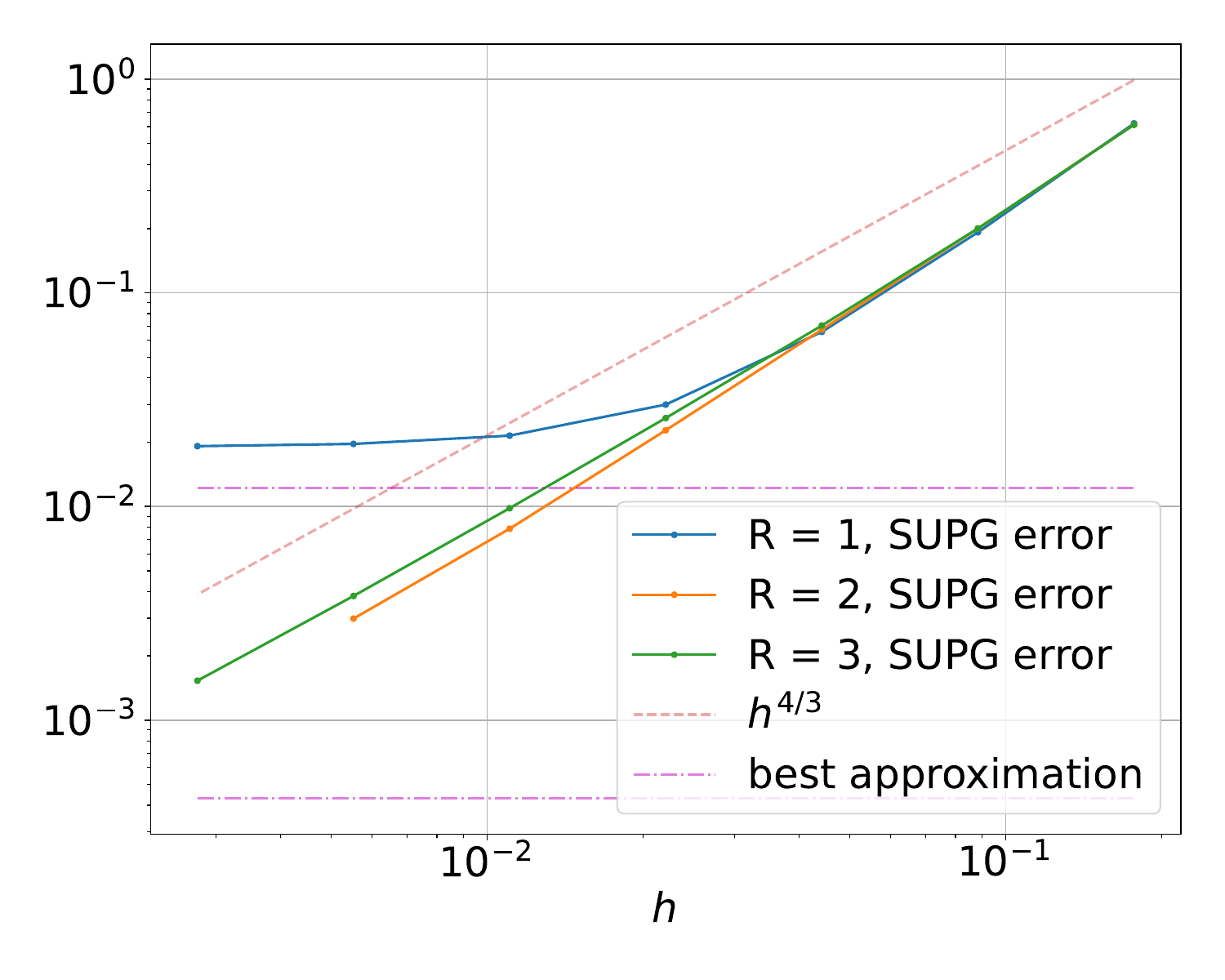}
  \caption{$V_h = \mathbb{P}^1(\Th)$ }
  \label{fig:supg-err-rank-deg-1}
\end{subfigure}%
\begin{subfigure}{.32\textwidth}
  \centering
  \includegraphics[width=\linewidth]{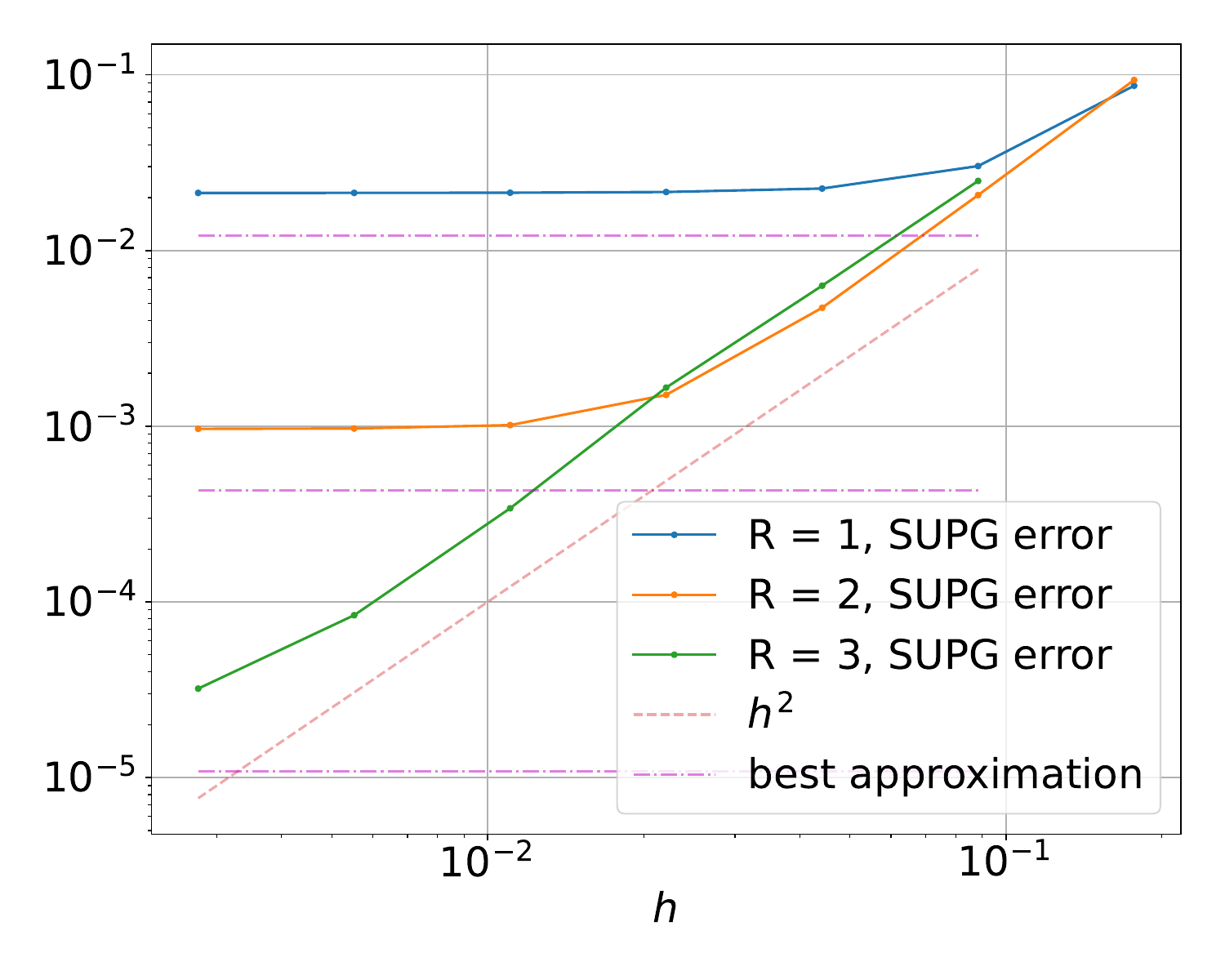}
  \caption{$V_h = \mathbb{P}^2(\Th)$}
  \label{fig:supg-err-rank-deg-2}
\end{subfigure}
\caption{SUPG error for $k = 1, 2$ and small approximation rank $R$.}
\label{fig:supg-err-rank}
\end{figure}

\vspace{-1em}

\bibliographystyle{siam} 
\bibliography{sdraft/refs.bib} 

\end{document}